\newcommand{\pDiamond}{\ensuremath{\Diamond\!\!\!\!\hspace{1pt}\Diamond}}
\newcommand{\pw}[1]{\bar{#1}}
\newcommand{\con}{\wedge}
\newcommand{\imp}{\rightarrow}
\newcommand{\sameas}{\ensuremath{\leftrightharpoons}}
\newcommand{\vp}{\ensuremath{\varphi}}
\newtheorem{theorem}{\indent\bf Теорема}[section]
\newtheorem{corollary}[theorem]{\indent\bf Следствие}
\newtheorem{lemma}[theorem]{\indent\bf Лемма}
\newtheorem{sublemma}[theorem]{\indent\bf Подлемма}
\begin{document}

\title{Сложность фрагментов произведений с логикой {\bf Т} в языке с одной переменной\thanks{Работа К.\,И.~Александрова и М.\,Н.~Рыбакова поддержана программой <<Научный фонд НИУ ВШЭ>>, грант \mbox{21-04-027}.}}

\author{К.\,И.~Александров$^1$, М.\,Н.~Рыбаков$^2$, Д.\,П.~Шкатов$^3$ \medskip\\
        {\small $^{1,2}$НИУ ВШЭ} \\ {\small $^3$University of the Witwatersrand, Johannesburg}}

\date{}

\sloppy

\maketitle

\selectlanguage{english}

\begin{abstract}
We show that products of propositional modal logics containing the logic of
reflexive frames $\mathbf{T}$ as a factor are embeddable into their
single-variable fragments.
The proof is a simplified version of the proof, to appear, of a
similar result for
products and expanding relativized products containing as a factor the logic $\mathbf{KTB}$
of reflexive and symmetric Kripke frames.
%
\end{abstract}

\selectlanguage{russian}

\begin{abstract}
В работе показано, что любое произведение нормальных модальных пропозициональных логик, содержащее логику~$\mathbf{T}$ в качестве сомножителя, погружается в свой фрагмент от одной переменной. Приведённое доказательство является более простой версией аналогичного доказательства, которое готовится к публикации, для произведений и полупроизведений, содержащих в качестве сомножителя логику~$\mathbf{KTB}$ рефлексивно-симметричных шкал Крипке.
\end{abstract}

\section{Введение}

Известно, что <<естественные>> модальные и суперинтуиционистские логики, как правило, разрешимы~\cite{ChZ}, но часто имеют довольно высокую сложность проблемы разрешения~\cite{Ladner77,Statman79}, которая во многих случаях не понижается при ограничениях на средства языка~\cite{Spaan-1993-1, Halpern95, ChRyb03, Rybakov06, Rybakov07a, RShIGPL19} (хотя и не всегда~\cite{Hem01}); то же относится и к фрагментам полимодальных логик~\cite{Rybakov07, Rybakov08, RShIGPL18, RShICTAC18}. В этой работе мы затронем аналогичный вопрос в отношении произведений модальных логик~\cite{GKWZ,Kurucz08}, который до недавнего времени практически не был исследован.

Произведения пропозициональных модальных логик были введены в 1970\nobreakdash-х годах
Сегербергом~\cite{Seg73} и независимо Шехтманом~\cite{Shehtman78}. Они оказались тесно связаны с предикатными модальными логиками~\cite{GSH98, GKWZ}, а также нашли приложения в различных областях теоретической информатики~\cite{BCWZ02, KKZW07, FHMV95, HV89, WvdH03, GorSh09b, WZ99, WZ2000, LWZ08}.

Известно, что вычислительная сложность произведений модальных логик довольно высока~\cite{Marx99,GJL10,RZ01,GKWZ05,HHK02}, поэтому вопрос о том, как она изменится (и изменится ли), если мы будем вводить ограничения на используемые средства языка, является вполне закономерным.

Одно из возможных дополнительных требований к средствам языка состоит в том, чтобы ограничить сверху модальную глубину рассматриваемых формул~\cite{GJL10, MM01}. Другое~--- и именно оно нам будет здесь интересно~--- это ограничение числа переменных в формулах. Несмотря на то, что, как было отмечено выше, этот вопрос довольно хорошо исследован для мономодальных и полимодальных логик, результаты этих исследований не переносятся напрямую на произведения логик: мы, конечно же, можем гарантировать сложность фрагментов произведений не ниже сложности соответствующих фрагментов сомножителей, но такая оценка снизу является очень грубой.

Недавно было показано~\cite{RShJLC21a}, что произведения логик, содержащие в качестве сомножителя логику~$\mathbf{K}$, полиномиально погружаются в свой фрагмент от одной переменной. Доказательство~\cite{RShJLC21a} существенно использует тот факт, что в~$\mathbf{K}$ существует бесконечно много попарно неэквивалентных константных формул, и по этой причине не переносится на логики типа $\mathbf{T}$, $\mathbf{S4}$, $\mathbf{S5}$ и~др., где ситуация с константными формулами иная. Тем не менее, в готовящейся к публикации работе~\cite{KTB} был предложен метод получения похожих результатов и для некоторых расширений логики~$\mathbf{K}$, допускающих лишь конечные множества попарно неэквивалентных константных формул; этот метод подробно изложен для логики~$\mathbf{KTB}$, но он применим и ко многим сабфреймовым логикам, содержащимся в~$\mathbf{KTB}$, а возможно, и к некоторым другим. 

Здесь мы предлагаем прямое доказательство аналогичного результата для произведений с сомножителем~$\mathbf{T}$, используя чуть более простое моделирование, чем то, что предложено в~\cite{KTB}. Во-первых, мы уберём некоторые технические детали, получив более простой метод моделирования переменных в формуле формулами от одной переменной. При этом мы потеряем некоторые <<хорошие>> свойства моделирования, предложенного в~\cite{KTB}, но получим более прозрачную конструкцию. Во-вторых, возможность предлагаемого здесь моделирования может представлять интерес, например, в тех случаях, когда нам по каким-либо причинам важна антисимметричность отношения достижимости в шкалах Крипке (например, в $\mathbf{Grz}$-шкалах или $\mathbf{GL}$-шкалах; моделирование в~\cite{KTB} предполагает отсутствие требования антисимметричности); исходная идея такого моделирования предложена в~\cite{Halpern95}, и по сути мы лишь адаптируем её для произведений, содержащих логику $\mathbf{T}$ в качестве сомножителя. Именно, мы покажем, что любое произведение модальных логик, содержащих сомножитель~$\mathbf{T}$, экспоненциально погружается в свой фрагмент от одной переменной; экспоненциальная оценка является <<платой>> за упрощение конструкции из~\cite{KTB}, но мы сделаем замечание о том, как преобразовать предлагаемую здесь конструкцию в аналогичную, но с полиномиальной оценкой для функции моделирования всех переменных формулы формулами от одной переменной.


\section{Необходимые определения}

    Мы будем рассматривать $n$-модальный пропозициональный язык (для произвольного целого положительного~$n$), содержащий счётное множество пропозициональных переменных, константу
    $\bot$, булевы связки $\wedge$, $\vee$ и $\imp$, а также $n$ унарных модальных операторов $\Box_1, \ldots, \Box_n$. Формулы определяются обычным образом:
    $$
    \begin{array}{lcl}
    \varphi & ::= & \bot\mid p\mid(\vp\wedge\vp)\mid(\vp\vee\vp)\mid(\vp\to\vp)\mid\Box_k\vp,
    \end{array}
    $$
    где $p$ пробегает множество всех пропозициональных переменных языка, а $k$~--- множество $\{1,\ldots,n\}$.
    При записи формул мы будем использовать следующие стандартные сокращения:
    $\neg \vp = (\vp \imp \bot)$ и
    $\Diamond_i \vp = \neg \Box_i \neg \vp$ для любого
    $i \in \{1, \ldots, n\}$. 
    Когда скобки опущены, предполагается, что унарные операторы связывают формулы сильнее, чем $\wedge$ и $\vee$, которые, в свою очередь, связываю формулы сильнее, чем~$\imp$.  

    Модальную глубину $\mathop{\mathsf{md}}\varphi$ формулы $\vp$ определим обычно, т.е. как наибольшее число вложенных модальных операторов в~$\vp$.


    Как обычно, посредством $\mathbf{K}$ обозначим минимальную нормальную мономодальную (т.е. $1$-модальная) логику~\cite{ChZ}.

    Будем называть {\em $n$-шкалой Крипке\/} набор $\frak{F} = \langle W, R_1, \ldots, R_n \rangle$, где $W$~--- непустое множество {\em точек\/} и $R_1, \ldots, R_n$~--- бинарные {\em отношения достижимости\/} на $W$. Говорим, что точка $y$ {\em $R_i$\nobreakdash-достижима} из точки $x$, если $x R_i y$.


    Как обычно, шкала $\frak{F}' = \langle W', R'_1, \ldots, R'_n \rangle$ называется {\em подшкалой\/} шкалы $\frak{F} = \langle W, R_1, \ldots, R_n \rangle$ (обозначается $\frak{F}' \subseteq \frak{F}$), если $W' \subseteq W$ и $R'_i = R_i \upharpoonright W'$ для любого $i \in \{1, \ldots, n \}$.

    {\em Оценкой\/} на шкале $\frak{F} = \langle W, R_1, \ldots, R_n \rangle$ называется функция $v$, сопоставляющая каждой пропозициональной переменной подмножество $v(p)$ множества~$W$.

    Истинность формулы $\vp$ в точке $x$ шкалы $\frak{F}$ при оценке $v$ определяется рекурсивно:


    \medskip

    \hspace{-5pt}
    $ 
    \begin{array}{llcll}
        \bullet & \frak{F}, x \models^{v} p
            & \sameas
            & \mbox{$x \in v(p)$; }
            \medskip\\
        \bullet & \frak{F}, x \not\models^{v} \bot;
            \medskip\\
        \bullet & \frak{F}, x \models^{v} \vp_1 \imp \vp_2
            & \sameas
            & \mbox{$\frak{F},x\not\models^{v}\vp_1$ или $\frak{F},x\models^{v}\vp_2$;}
            \medskip\\
        \bullet & \frak{F}, x \models^{v} \Box_i \vp_1
            & \sameas
            & \mbox{$\frak{F}, y \models^{v} \vp_1$ для любого $y$: $x R_i y$.}
    \end{array}
    $

    \medskip


    Как обычно, формула $\vp$ называется истинной в $\frak{F}$ (пишем $\frak{F} \models \vp$), если $\frak{F}, x \models^v \vp$ для каждой точки $x$ из $\frak{F}$ и любой оценки~$v$. Множество формул $\varPhi$ считаем истинным в $\frak{F}$ (пишем $\frak{F} \models \varPhi$), если $\frak{F} \models \vp$ для любой формулы $\vp \in \varPhi$.

    Если $L$~--- модальная логика, то шкалу $\frak{F}$ называем {\it $L$-шкалой\/}, если $\frak{F} \models L$.

    Для класса $\frak{C}$ $n$-шкал Крипке определим $\mathbf{L}(\frak{C})$ как множество $n$-модальных формул, истинных в каждой шкале из $\frak{C}$; множество $\mathbf{L}(\frak{C})$ является нормальной $n$-модальной логикой и называется {\em логикой класса\/ $\frak{C}$};  $n$\nobreakdash-модальную логику $L$ называем {\em полной по Крипке\/}, если $L = \mathbf{L}(\frak{C})$ для некоторого непустого класса $\frak{C}$ $n$\nobreakdash-шкал.

    Мономодальную логику $\mathbf{T}$ определим как логику класса всех рефлексивных $1$-шкал~\cite{ChZ}.



    {\em Произведением\/} $1$\nobreakdash-шкал $\frak{F}_1 = \langle W_1, R_1 \rangle, \ldots, \frak{F}_n = \langle W_n, R_n \rangle$ называется $n$-шкала
    $$
    \begin{array}{lcl}
        \frak{F}_1 \times \ldots \times \frak{F}_n
        & = &
        \langle W_1 \times \ldots \times W_n, \bar{R}_1, \ldots, \bar{R}_n \rangle,
    \end{array}
    $$
    где для любого $i \in \{1, \ldots, n\}$,
    $$
    \begin{array}{lcl}
        \langle x_1, \ldots, x_n \rangle \bar{R}_i \langle y_1, \ldots, y_n
        \rangle
        & \leftrightharpoons &
                             \mbox{$x_i R_i y_i$ и $x_k = y_k$, для
                             любого $k \in \{1, \ldots, n\} \setminus \{i\}$.}
    \end{array}
$$

    Будем использовать следующую договоренность для обозначений: если $\pw{x} \in W_1 \times \ldots \times W_n$ и $s \in \{1, \ldots, n\}$, то $x_s$ является $s$-й координатой $\pw{x}$.

%
%

    {\em Произведением\/} полных по Крипке мономодальных логик $L_1, \ldots, L_n$ называется $n$-модальная логика
    $$
    \begin{array}{lcl}
        L_1 \times \ldots \times L_n & = & L (\{ \frak{F}_1 \times \ldots \times
        \frak{F}_n :\, \frak{F}_1 \models L_1, \ldots, \frak{F}_n \models L_n
        \}).
    \end{array}
    $$
    
    Сразу заметим, что произведение логик $L_1, \ldots, L_n$, перемноженных в другом порядке, даёт ту же логику с точностью до переобозначения модальностей. Поскольку мы планируем рассматривать произведения, содержащие сомножитель~$\mathbf{T}$, можем (и будем) считать, что $L_1 = \mathbf{T}$.

    Зафиксируем положительное целое $n$ и полные по Крипке логики $L_2,\ldots L_n$. Пусть во всех дальнейших рассуждениях $L_1 = \mathbf{T}$ и $L = L_1 \times L_2 \times \ldots \times L_n$.





\section{Фрагменты с одной переменной}

    Пусть $\vp$~--- $n$-модальная формула, содержащая только переменные $p_1, \ldots, p_m$, и пусть $p$~--- пропозициональная переменная, отличная от $p_1, \ldots, p_m$.

    Положим $\pDiamond_1 \psi = \Diamond_1 (\neg p \con \Diamond_1 (p \con \psi))$. Пусть для любого $k \in \{1, \ldots, m, m+1\}$
    $$
    \begin{array}{lcl}
         \alpha_k & = & \pDiamond_1^{k} \Box_1^{\phantom{i}} p;\\
         \beta_k & = & \neg p \con \Diamond_1^{\phantom{i}} (p\wedge \alpha_k). \\
    \end{array}
    $$
    Пусть также
    $$
    \begin{array}{lclcl}
         B  & = & \beta_{m+1} & = & \neg p \con \Diamond_1 (p\wedge \alpha_{m+1}).
    \end{array}
    $$

    Далее рекурсивно определим перевод~$\sigma$:
    $$
    \begin{array}{lcll}
        \sigma(p_k) & = & \beta_k, & \mbox{где $k \in \{1, \ldots, m \}$;} \\
        \sigma(\bot) & = & \bot;  \\
        \sigma(\psi \wedge \chi) & = & \sigma(\psi) \wedge \sigma(\chi);  \\
        \sigma(\psi \vee \chi) & = & \sigma(\psi) \vee \sigma(\chi);  \\
        \sigma(\psi \imp \chi) & = & \sigma(\psi) \imp \sigma(\chi);  \\
         \sigma(\Box_i  \psi) & = & \Box_i (B \imp \sigma(\psi)), & \mbox{где $i \in \{1, \ldots, n \}$}.
    \end{array}
    $$

Для каждого натурального $k$ определим модальности $\Box^{\leqslant k}$ и $\Box^{\leqslant k}_{-1}$, положив
    $$
    \begin{array}{lcl}
        \Box^{\leqslant 0} \psi & = & \psi;
        \\
        \Box^{\leqslant k+1} \psi & = & \Box^{\leqslant k}\psi\wedge\Box_1\Box^{\leqslant k}\psi\wedge \ldots \wedge\Box_n\Box^{\leqslant k} \psi;
        \\
        \Diamond^{\leqslant k} \psi & = & \neg\Box^{\leqslant k}\neg \psi;
        \\
        \Box^{\leqslant 0}_{-1} \psi & = & \psi;
        \\
        \Box^{\leqslant k+1}_{-1} \psi & = & \Box^{\leqslant k}_{-1}\psi\wedge\Box_2^{\phantom{i}}\Box^{\leqslant k}_{-1}\psi\wedge \ldots \wedge\Box_n^{\phantom{i}}\Box^{\leqslant k}_{-1} \psi.
    \end{array}
    $$

Определим формулу $A$, положив
    $$
    \begin{array}{lcl}
     A
    & =
    & \displaystyle
      B \con
     \Box^{\leqslant\mathop{\mathsf{md}}\vp}_{\phantom{i}} (B \imp \Box^{\leqslant\mathop{\mathsf{md}}\vp}_{-1} B)
     \con
     \Box^{\leqslant\mathop{\mathsf{md}}\vp}_{\phantom{i}} (\Diamond^{\leqslant \mathop{\mathsf{md}}\vp}_{-1} B \imp B).
    \end{array}
    $$
    Поскольку модальная глубина формулы $\vp$ ограничена сверху длиной формулы~$\vp$, формула $A$ может быть построена за время, ограниченное экспонентой от длины~$\vp$.

    \begin{lemma}
    \label{lem}
    $\varphi \in L \iff A \to \sigma (\varphi) \in L$.
    \end{lemma}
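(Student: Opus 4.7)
Доказываю лемму в обоих направлениях контрапозицией: $\vp\notin L\iff A\to\sigma(\vp)\notin L$. Направление $(\Leftarrow)$ состоит в построении опровергающей модели для $A\to\sigma(\vp)$ из опровергающей модели для $\vp$; направление $(\Rightarrow)$~--- в извлечении опровержения $\vp$ из опровержения $A\to\sigma(\vp)$ посредством подходящей оценки $v'$, определяемой по формулам $\beta_k$.

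Для $(\Leftarrow)$ фиксирую произведение $\frak{F}=\frak{F}_1\times\ldots\times\frak{F}_n$ $L$-шкал и оценку $v$, опровергающие $\vp$ в точке $\pw{x}_0$. Построю шкалу $\frak{F}'_1\supseteq\frak{F}_1$, присоединив к каждой точке $x\in W_1$ «хвосты» $T_1(x),\ldots,T_{m+1}(x)$, где $T_k(x)$~--- $R'_1$-цепочка из $2k+1$ новых точек с рефлексивными петлями и $R'_1$-стрелкой от $x$ к первой точке хвоста. Шкала $\frak{F}'_1$ остаётся рефлексивной, т.е. $\mathbf{T}$-шкалой, остальные сомножители не меняются, и $\frak{F}'=\frak{F}'_1\times\frak{F}_2\times\ldots\times\frak{F}_n$ есть $L$-произведение. Оценку $v'$ переменной $p$ задаю так: во всех «исходных» точках $W_1\times W_{-1}$ полагаю $p$ ложной; на хвосте $T_{m+1}$~--- чередующейся (независимо от столбца $\pw{x}_{-1}$); на хвосте $T_k$ при $k\leqslant m$ в столбце $\pw{x}_{-1}$~--- чередующейся, если $\frak{F},(x,\pw{x}_{-1})\models^v p_k$, и тождественно ложной иначе. Непосредственный анализ цепочек для $\alpha_k$ даёт: в любой исходной точке $(x_1,\pw{x}_{-1})$ формула $\beta_k$ при $v'$ выполнена тогда и только тогда, когда $p_k$ истинна там при $v$ (нужная чередующаяся цепочка с $\Box_1 p$ на конце реализуется исключительно по хвосту $T_k$); при этом $B=\beta_{m+1}$ истинна во всех исходных точках и ложна во всех хвостовых (остаток хвоста слишком короток для цепочки длины $2m+3$). Поскольку $R'_i$ при $i\neq 1$ сохраняет первую координату, разбиение на исходные и хвостовые точки выдерживается, и $A$ истинно в $\pw{x}_0$. Индукция по $\psi\subseteq\vp$ на исходной части модели даёт $\frak{F}',(x_1,\pw{x}_{-1})\models^{v'}\sigma(\psi)\iff\frak{F},(x_1,\pw{x}_{-1})\models^v\psi$; единственный нетривиальный шаг~--- $\Box_1\chi$: хвостовые $R'_1$-последователи исходной точки не являются $B$-точками, фильтр $B\imp\sigma(\chi)$ в них тривиален, и $\Box_1(B\imp\sigma(\chi))$ сводится к $\sigma(\chi)$ во всех исходных $R_1$-последователях, что по индукционному предположению совпадает с $\Box_1\chi$ при $v$.

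Для $(\Rightarrow)$ рассмотрю модель $(\frak{F},v)$ над $L$-произведением, опровергающую $A\to\sigma(\vp)$ в $\pw{x}_0$; тогда в $\pw{x}_0$ истинно $A$ и ложно $\sigma(\vp)$. Определяю $v'(p_k)=\{\pw{y}:\frak{F},\pw{y}\models^v\beta_k\}$ и докажу индукцией по $\psi\subseteq\vp$, что для любой $B$-точки $\pw{y}$ в шаре радиуса $\mathop{\mathsf{md}}\vp-\mathop{\mathsf{md}}\psi$ вокруг $\pw{x}_0$ выполняется $\frak{F},\pw{y}\models^{v'}\psi\iff\frak{F},\pw{y}\models^v\sigma(\psi)$; применение этого к $\vp$ в $\pw{x}_0$ (а эта точка~--- $B$-точка по первому конъюнкту $A$) даст опровержение $\vp$ при $v'$, откуда $\vp\notin L$. База $\psi=p_k$ и булевы шаги очевидны; в шаге $\Box_i\chi$ при $i\neq 1$ два последних конъюнкта $A$ обеспечивают, что $R_i$-последователи $B$-точек внутри шара сами суть $B$-точки, благодаря чему фильтр $B\imp\sigma(\chi)$ работает тождественно и индукция замыкается.

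Главное затруднение~--- шаг $\Box_1\chi$ в $(\Rightarrow)$: $A$ не гарантирует $R_1$-замкнутости множества $B$-точек (более того, само $B$ влечёт существование $R_1$-последователя с истинным $p$, т.е. не-$B$-точки), так что $\Box_1\chi$ при $v'$ в $\frak{F}$ формально сильнее, чем $\Box_1(B\imp\sigma(\chi))$ при $v$. Разрешение состоит в том, чтобы вести индукцию в подмодели $\frak{F}|_B$ на $B$-точках шара: ограничение $R_1$ вырезает как раз стрелки в не-$B$-точки, и $\Box_1(B\imp\sigma(\chi))$ в $\pw{y}$ действительно эквивалентно $\Box_1\chi$ относительно оставшихся $B$-последователей. Рефлексивность $R_1$ при этом наследуется, а $R_{-1}$-замкнутость множества $B$-точек, обеспеченная $A$, позволяет организовать $\frak{F}|_B$ в виде $L$-модели, на которой и живёт искомое опровержение~$\vp$.
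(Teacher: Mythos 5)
Направление $(\Leftarrow)$ у вас по существу совпадает с авторским: те же рефлексивные <<хвосты>>-цепочки, присоединяемые к каждой точке $W_1$, та же оценка переменной $p$ (безусловное чередование на хвосте для $B$ и чередование <<по сигналу $p_k$>> на хвосте для $\beta_k$) и та же индукция с релятивизацией по~$B$; здесь возражений нет. Существенный пробел~--- в направлении $(\Rightarrow)$, ровно в том месте, которое вы сами называете главным затруднением. Чтобы из опровержения $\vp$ при оценке $v'$ заключить $\vp\notin L$, опровергающая шкала должна быть шкалой логики $L=\mathbf{T}\times L_2\times\ldots\times L_n$, т.е. по определению $L$ фактически произведением подходящих шкал. Ваша шкала $\frak{F}|_B$~--- ограничение произведения на множество $B$-точек (шара)~--- произведением, вообще говоря, не является, и довода, что она валидирует $L$, вы не приводите: фраза <<$R_{-1}$-замкнутость множества $B$-точек позволяет организовать $\frak{F}|_B$ в виде $L$-модели>> и есть то, что требуется доказать. Замкнутость, гарантируемая формулой $A$, имеет место лишь внутри шара и лишь на ограниченную глубину; множество $B$-точек не обязано быть <<прямоугольником>> $W_1'\times W_2\times\ldots\times W_n$; и, главное, при таком ограничении урезаются также сомножители $\frak{F}_2,\ldots,\frak{F}_n$, тогда как логики $L_2,\ldots,L_n$~--- произвольные полные по Крипке и сабфреймовыми не предполагаются, так что истинность $L_i$ на урезанных сомножителях ниоткуда не следует.

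В статье это затруднение разрешается иначе, и этой идеи у вас нет: урезается \emph{только первый} сомножитель. Именно, $W_1'$ объявляется множеством первых координат $B$-точек шара, и берётся $\frak{F}'=(\frak{F}_1\upharpoonright W_1')\times\frak{F}_2\times\ldots\times\frak{F}_n$: структура произведения сохраняется, сабфреймовость нужна лишь для $\mathbf{T}$, а затем отдельной подлеммой (и именно здесь по-настоящему работают два последних конъюнкта $A$) доказывается, что всякая точка из $\bar{W}'$, лежащая в шаре радиуса $\mathop{\mathsf{md}}\vp$, сама является $B$-точкой: для точек $\pw{x},\pw{y}$ шара с общей первой координатой находится точка с той же первой координатой, из которой обе достижимы по отношениям $R_2,\ldots,R_n$ за ограниченное число шагов, и истинность $B$ переносится с $\pw{x}$ на $\pw{y}$ через конъюнкты $\Box^{\leqslant\mathop{\mathsf{md}}\vp}(\Diamond^{\leqslant\mathop{\mathsf{md}}\vp}_{-1}B\to B)$ и $\Box^{\leqslant\mathop{\mathsf{md}}\vp}(B\to\Box^{\leqslant\mathop{\mathsf{md}}\vp}_{-1}B)$. Без этой подлеммы (или её эквивалента) и без перехода к произведению с урезанным лишь первым сомножителем ваш финальный шаг <<на $\frak{F}|_B$ живёт опровержение $\vp$, откуда $\vp\notin L$>> не обоснован; сама индукция у вас при этом прошла бы, так что пробел локализован именно в предъявлении $L$-шкалы.
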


    \begin{proof}
    $(\Leftarrow)$ Предположим $\varphi \notin L$. Тогда  $\mathfrak{F}, \bar u \not\models^{v} \varphi$ для некоторого произведения $\mathfrak{F} = (\bar W, \bar R_1, \ldots, \bar R_n)$ шкал $\mathfrak{F}_1, \ldots, \mathfrak{F}_n$, таких, что $\mathfrak{F}_i = (W_i, R_i) \models L_i$ для любого $i \in \{1, 2, \ldots, n\}$, некоторой точки $\bar u \in \bar W$ и некоторой оценки $v$ на $\mathfrak{F}$.

    Построим произведение шкал $\mathfrak{F}'$ и оценку $v'$ на $\mathfrak{F}'$, опровергающую $A \rightarrow \sigma (\varphi)$.

    Для любого $k \in \{1, \ldots, m + 1\}$ определим $1$-шкалу $\mathfrak{B}_k = (U_k, S_k)$, положив $U_k = \{\langle v_i^k, w_i^k \rangle: 0 \leqslant i \leqslant k\}$ и взяв в качестве $S_k$ рефлексивное замыкание отношения $\{ \langle v_i^k, w_i^k \rangle: 0 \leqslant i \leqslant k\} \cup \{ \langle w_i^k, v_{i+1}^k \rangle: 0 \leqslant i \leqslant k - 1\}$.

    Для любого $x\in W_1$ определим шкалу $\mathfrak{B}_{k}^{x} = (U_k \times \{x\}, S_k^x)$, которая изоморфна $\mathfrak{B}_k$ при изоморфизме $f: w \mapsto \langle w, x \rangle$.

    Пусть $\mathfrak{F}_1' = (W_1',R_1')$, где
    $$
    \begin{array}{lcl}
    W_1' & = & \displaystyle W_1 \cup \bigcup\limits_{k=1}^{m+1} (U_k \times W_1), 
    \end{array}
    $$
    а $R_1'$~--- рефлексивное замыкание отношения
    $$
	R_1 \cup \bigcup\limits_{x \in W_1} \bigcup\limits_{k=1}^{m+1} S_k^x \cup \{\langle x, \langle v_0, x\rangle\rangle: x \in W_1\}.
	$$

    Тогда в $\mathfrak{F}_1'$ для любого $x \in W_1$, а также для любого $k \in \{1, \ldots, m+1\}$ корень $\langle v_0^k, x \rangle$ шкалы $\mathfrak{B}_k^x$ является $R_1'$-достижимым из точки~$x$.

    Положим $\mathfrak{F}' = \mathfrak{F}_1' \times \mathfrak{F}_2^{\phantom{i}} \times \ldots \times \mathfrak{F}_n^{\phantom{i}} = (\bar W', \bar R_1', \ldots, \bar R_n')$. Таким образом, $\mathfrak{F} \subseteq \mathfrak{F}'$. Также $R_1'$~--- рефлексивное отношение, поэтому $\mathfrak{F}_1' \models \mathbf{T}$ и $\mathfrak{F}' \models L$.

    Определим оценку $v'$ на шкале $\mathfrak{F}'$, полагая $\bar x \in v'(p)$, если выполнено хотя бы одно из следующих условий:
    \begin{itemize}
        \item $\exists i \in \{1, \ldots, m + 1\} ~\exists \bar z \in \bar W ~\bar x = (\langle w_i^{m+1}, z_1^{\phantom{i}} \rangle, z_2^{\phantom{i}}, \ldots, z_n^{\phantom{i}})$
        \item $\exists k \in \{1, \ldots, m\} ~\exists i \in \{1, \ldots, k\} ~\exists \bar z \in \bar W ~(\mathfrak{F},\bar z \models^v p_k^{\phantom{i}} ~\& ~\bar x = (\langle w_i^k, z_1^{\phantom{i}} \rangle, z_2^{\phantom{i}}, \ldots, z_n^{\phantom{i}}))$.
    \end{itemize}

   Заметим, что тогда формула $B$ истинна при оценке $v'$ в точности в тех точках шкалы $\mathfrak{F}'$, которые принадлежат множеству~$\bar W$. Отсюда сразу следует, что $\mathfrak{F}, \bar u \models^{v'} A$.
   
   Осталось показать, что $\mathfrak{F}, \bar u \not\models^{v'} \sigma (\varphi)$.

    Для того, чтобы это доказать, сначала заметим, что для любого $k \in \{1, \ldots, m\}$ формула $\beta_k$ ведет себя в $\mathfrak{F}'$ при оценке $v'$ так же, как переменная $p_k$ в $\mathfrak{F}$ при оценке~$v$. Именно, для любого $\bar y \in \bar W$ справедлива следующая эквивалентность:
        $$
        \begin{array}{lcl}
        \mathfrak{F}', \bar y \models^{v'} \beta_k 
          & \Longleftrightarrow 
          & \mathfrak{F}, \bar y \models^v p_k.
        \end{array}
        \eqno{\mbox{$(\ast)$}}
        $$
    Далее покажем, что для любой подформулы $\theta$ формулы $\vp$ и для любого $\bar x \in \bar W$ имеет место эквивалентность
    $$
    \begin{array}{lcl}
    \mathfrak{F},\bar x \models^v \theta 
      & \iff 
      & \mathfrak{F}', \bar x \models^{v'} \sigma (\theta),
    \end{array}
    $$
    которую можно обосновать индукцией по построению формулы~$\theta$.

    Если $\theta = \bot$, то утверждение очевидно; 
    если $\theta = p_k$, то $\sigma(\theta) = \beta_k$ и утверждение следует из~\mbox{$(\ast)$}. Этим обосновывается базис индукции.

    Обоснуем индукционный шаг.
    
    В тех случаях, когда $\theta = \psi \wedge \chi$, $\theta = \psi \vee \chi$ или $\theta = \psi \to \chi$, всё тривиально.

    Пусть $\theta = \Box_i\psi$. Тогда $\sigma(\theta) = \Box_i(B \rightarrow \sigma (\psi))$. Пусть $\bar x \in \bar W$.

    Предположим, что $\mathfrak{F}, \bar x \not\models^v \Box_i \psi$. Тогда $\mathfrak{F}, \bar y \not\models^v \psi$ для некоторого $\bar y \in \bar R_i(\bar x) \subseteq \bar W$. Заметим, что $\mathfrak{F}',\bar y \models^{v'} B$, т.к. $\bar y \in W$. По предположению индукции, $\mathfrak{F}', \bar y \not\models^{v'} \sigma(\psi)$. Нетрудно видеть, что $\bar y \in \bar R_i'(\bar x)$, т.к. $\mathfrak{F} \subseteq \mathfrak{F}'$. Тогда $\mathfrak{F}', \bar x \not\models^{v'} \Box_i(B \rightarrow \sigma(\psi))$.

    Пусть теперь $\mathfrak{F}', \bar x \not\models^{v'} \Box_i(B \rightarrow \sigma(\psi))$. Тогда $\mathfrak{F}', \bar y \models^{v'} B$ и $\mathfrak{F}', \bar y \not\models^{v'} \sigma(\psi)$ для некоторого $\bar y \in \bar R_i'(\bar x)$. Тогда $\bar y \in \bar W$, т.к. $\mathfrak{F}', \bar y \models^{v'} B$. По предположению индукции, $\mathfrak{F}', \bar y \not\models^v \psi$. Поскольку $\bar x \in \bar W$, получаем, что $\bar y \in \bar R_i(\bar x)$. Следовательно, $\mathfrak{F}, \bar x \not\models^{v} \Box_i \psi$.


    Из доказанного и того, что $\mathfrak{F}, \bar u \not\models^v \vp$ и $\bar u \in \bar W$, следует, что $\mathfrak{F'}, \bar u \not\models^{v'} \sigma(\vp)$.

    Как мы видели, $\mathfrak{F}' \models L$, а значит, $A \imp \sigma(\vp) \notin L$.

    $(\Rightarrow)$ Пусть $A \imp \sigma(\vp) \notin L$. Тогда $\mathfrak{F}, \bar u \not\models^v A \imp \sigma(\vp)$ для некоторого произведения $\mathfrak{F} = (\bar W, \bar R_1, \ldots, \bar R_n)$ шкал $\mathfrak{F}_1, \ldots, \mathfrak{F}_n$, такого, что $\mathfrak{F}_i = (W_i, R_i) \models L_i$ для любого $i \in \{1, 2, \ldots, n\}$, для некоторой точки $\bar u \in \bar W$ и некоторой оценки $v$ на $\mathfrak{F}$.

    Построим произведение шкал $\mathfrak{F}'$ и оценку $v'$ на $\mathfrak{F}'$, позволяющую опровергнуть формулу~$\vp$.

    Для каждого натурального $k$ определим отношения $\bar{R}^{\leqslant k}_{\phantom{i}}$ и $\bar{R}^{\leqslant k}_{-1}$, положив
    $$
    \begin{array}{lcl}
        \bar{R}^{\leqslant 0} & = & \{\langle \bar{x},\bar{x}\rangle : \bar{x}\in\bar{W}\};
        \\
        \bar{R}^{\leqslant k+1} & = & \bar{R}^{\leqslant k}\cup \bar{R}_{1} \circ \bar{R}^{\leqslant k} \cup \ldots \cup \bar{R}_{n} \circ \bar{R}^{\leqslant k};
        \\
        \bar{R}^{\leqslant 0}_{-1} & = & \{\langle \bar{x},\bar{x}\rangle : \bar{x}\in\bar{W}\};
        \\
        \bar{R}^{\leqslant k+1}_{-1} & = & \bar{R}^{\leqslant k}_{-1}\cup \bar{R}_{2}^{\phantom{i}} \circ \bar{R}^{\leqslant k}_{-1} \cup \ldots \cup \bar{R}_{n}^{\phantom{i}} \circ \bar{R}^{\leqslant k}_{-1};
        \\
    \end{array}
    $$

    Пусть
    $$
    \begin{array}{lcl}
        W'_1
        & =
        & \{ x_1^{\phantom{i}} \in W_1^{\phantom{i}} :
        \mbox{$\frak{F}, \pw{x} \models^v B$ и
        $\pw{x}\in \bar{R}^{\leqslant \mathop{\mathsf{md}}\vp}(u)$}\}.
    \end{array}
    $$

    Из того, что $\frak{F}, \pw{u} \models A$, следует, что $\frak{F}, \pw{u} \models B$. Кроме того, $\pw{u}\in \bar{R}^{\leqslant \mathop{\mathsf{md}}\vp}(u)$. Значит,
    $u_1^{\phantom{i}} \in W'_1$, и поэтому $W'_1 \ne \varnothing$.

    Пусть $R'_1 = R_1^{\phantom{i}} \upharpoonright W'_1$ и $\frak{F}'_1 = \langle W'_1, R'_1 \rangle$.

    Пусть
    $\frak{F}' = \frak{F}'_1 \times \frak{F}_2^{\phantom{i}} \times \ldots \times
    \frak{F}_n^{\phantom{i}} = \langle \bar{W}', \bar{R}'_1, \ldots, \bar{R}'_n
    \rangle$.

    Из того, что $\frak{F}'_1 \subseteq \frak{F}_1^{\phantom{i}}$, следует, что
    $\frak{F}' \subseteq \frak{F}$.  Напомним, что $L_1 = \mathbf{T}$, и это сабфреймовая логика, поэтому $\frak{F}'_1 \models L_1^{\phantom{i}}$; следовательно, $\frak{F}' \models L$.

    \begin{sublemma}
    \label{sublem:D}
        $\pw{y}\in \bar{W'}\cap \bar{R}^{\leqslant \mathop{\mathsf{md}}\vp}(u) ~\Longrightarrow~ \frak{F},\pw{y}\models^{v} B$.
    \end{sublemma}

    \begin{proof}
        Если $\pw{y}\in \bar{W'}\cap \bar{R}^{\leqslant \mathop{\mathsf{md}}\vp}(u)$, то существует такая точка $\pw{x}\in \bar{W'}$, что $x_1=y_1$, $\pw{x}\in \bar{R}^{\leqslant \mathop{\mathsf{md}}\vp}(u)$ и $\frak{F}, \pw{x} \models^v B$. Тогда ясно, что существует такая точка $z\in \bar{R}^{\leqslant \mathop{\mathsf{md}}\vp}_{-1}(x)\cap \bar{R}^{\leqslant \mathop{\mathsf{md}}\vp}_{-1}(y)$, что $z_1=x_1=y_1$. Второе условие для нас несущественно. Из первого же, с учётом того, что $\frak{F}, \pw{x} \models^v (B\to \Box^{\leqslant \mathop{\mathsf{md}}\vp}_{-1})$, заключаем, что $\frak{F}, \pw{x} \models^v B$, откуда, с учётом того, что $\frak{F}, \pw{y} \models^v (\Diamond^{\leqslant \mathop{\mathsf{md}}\vp}_{-1}\to B)$, заключаем, что $\frak{F},\pw{y}\models^{v} B$.
    \end{proof}

    Зададим оценку $v'$ на $\frak{F}'$ так, чтобы для каждого $k \in
    \{1, \ldots, m\}$ выполнялось условие
    $$
    \begin{array}{lcl}
        v'(p_k) & = & \{\pw{x}\in\bar{W}' :
        \frak{F},\pw{x}\models^{v} \beta_k\}.
    \end{array}
    $$

    Теперь заметим, что для любой подформулы
    $\theta$ формулы $\varphi$ и для любой точки
    $\pw{x}\in\bar{W}'\cap \bar{R}^{\leqslant \mathop{\mathsf{md}}\vp - \mathop{\mathsf{md}}\theta}(u)$
    $$
    \begin{array}{lcl}
        \frak{F}',\pw{x}\models^{v'} \theta
        & \iff
        & \frak{F},\pw{x}\models^{v}\sigma(\theta),
    \end{array}
    $$
    что несложно доказать индукцией по построению формулы~$\theta$.

    Если $\theta = {\bot}$, то $\sigma(\theta) = {\bot}$, и утверждение очевидно; если $\theta = p_k$, то $\sigma(\theta) = \beta_k$, и тогда утверждение следует из определения~$v'$. Этим обеспечивается базис индукции.

    Обоснуем индукционный шаг.
    
    В тех случаях, когда $\theta = \psi \wedge \chi$, $\theta = \psi \vee \chi$ или $\theta = \psi \to \chi$, всё тривиально.

    Пусть $\theta = \Box_i\psi$. Тогда $\sigma(\theta) = \Box_i(B \rightarrow \sigma (\psi))$. Пусть $\pw{x}\in\bar{W}'\cap \bar{R}^{\leqslant \mathop{\mathsf{md}}\vp - \mathop{\mathsf{md}}\theta}(u)$.

    Пусть $\frak{F}',\pw{x}\not\models^{v'} \Box_i \psi$. Тогда
    $\frak{F}',\pw{y}\not\models^{v'}\psi$ для некоторого $\bar{y}\in \bar{R}'_i(\bar{x})$.

    Из того, что $\frak{F}' \subseteq \frak{F}$ следует, что
    $\bar{y}\in \bar{R}_i(\bar{x})$, а значит, $\pw{y}\in\bar{W}'\cap \bar{R}^{\leqslant \mathop{\mathsf{md}}\vp - \mathop{\mathsf{md}}\psi}(u)$.

    Тогда, по предположению индукции,
    $\frak{F},\pw{y}\not\models^{v} \sigma(\psi)$, а по подлемме~\ref{sublem:D}, $\frak{F},\pw{y}\models^{v} B$. Значит,
    $\frak{F},\pw{x}\not\models^{v} \Box_i (B \imp \sigma(\psi))$.

    Наоборот, предположим что
    $\frak{F},\pw{x}\not\models^{v} \Box_i (B \imp \sigma(\psi))$.  Тогда
    $\frak{F},\pw{y} \models^{v} B$ и
    $\frak{F},\pw{y}\not\models^{v}\sigma(\psi)$ для некоторого
    $\bar{y}\in \bar{R}_i(\bar{x})$.  Ясно, что $\pw{y}\in\bar{W}'\cap \bar{R}^{\leqslant \mathop{\mathsf{md}}\vp - \mathop{\mathsf{md}}\psi}(u)$. Таким образом, $y_1^{\phantom{i}} \in W'_1$ по определению $W'_1$; но тогда $\bar{y} \in \bar{W}'$.  Поэтому, по предположению
    индукции, $\frak{F}',\pw{y}\not\models^{v'} \psi$. Тогда из того, что $\bar{x}\in \bar{W}'$, следует, что $\bar{y}\in \bar{R}'_i(\bar{x})$.  Значит,
    $\frak{F}',\pw{x}\not\models^{v'} \Box_i \psi$.

    Как мы видели, $\bar{u} \in \bar{W}'$, и поэтому
    $\frak{F}', \bar{u} \not\models^{v'} \vp$, а поскольку $\frak{F}' \models L$, получаем, что $\vp \notin L$.
    \end{proof}

    Прямым следствием леммы~\ref{lem} является следующая теорема.

    \begin{theorem}
    \label{th:T}
        Любое произведение логик вида $\mathbf{T}\times L_2 \times \ldots \times L_n$ экспоненциально погружается в собственный фрагмент от одной переменной.
    \end{theorem}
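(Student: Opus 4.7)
План состоит в том, чтобы вывести теорему непосредственно из леммы~\ref{lem}. В качестве функции погружения я возьму отображение $f\colon \vp \mapsto A \imp \sigma(\vp)$, где $A$ и $\sigma$~--- те же, что введены перед леммой~\ref{lem}. По самой лемме выполнено $\vp \in L \iff f(\vp) \in L$, чем сразу обеспечивается корректность погружения; поэтому всё, что остаётся,~--- это проверить, что $f(\vp)$ есть формула от одной переменной, и оценить длину $f(\vp)$.

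Первое~--- проверка того, что $f(\vp)$ содержит лишь одну пропозициональную переменную,~--- немедленно следует из определений: в формулы $\alpha_k$, $\beta_k$, $B$ и $A$ переменная $p$ входит как единственная пропозициональная переменная, а перевод $\sigma$ лишь устраняет все $p_1, \ldots, p_m$, заменяя их на соответствующие $\beta_k$, и новых переменных не вводит. Индукцией по построению $\vp$ сразу получаем, что в $\sigma(\vp)$ (а значит, и в $f(\vp)$) встречается только~$p$.

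Остаётся оценка длины $f(\vp)$, что и является единственным содержательным пунктом. Формулы $\alpha_k$ и $\beta_k$ имеют длину, полиномиальную от $k \leqslant m+1 \leqslant |\vp|+1$; подстановка каждого $\beta_k$ вместо $p_k$ и вставка $B$ под каждое вхождение $\Box_i$ увеличивают длину $\sigma(\vp)$ лишь полиномиально от $|\vp|$. Главное (и ожидаемое) препятствие~--- длина самой формулы $A$: сокращения $\Box^{\leqslant\mathop{\mathsf{md}}\vp}$ и $\Box^{\leqslant\mathop{\mathsf{md}}\vp}_{-1}$ разворачиваются в конъюнкции длины порядка $(n+1)^{\mathop{\mathsf{md}}\vp}$, и, поскольку $\mathop{\mathsf{md}}\vp \leqslant |\vp|$, длина $A$ (а вместе с ней и длина $f(\vp)$) оказывается ограниченной экспонентой от $|\vp|$~--- в точности как уже отмечено в комментарии сразу после определения $A$. Это и даёт заявленную в теореме экспоненциальную оценку погружения; никаких других препятствий на этом пути не ожидается.
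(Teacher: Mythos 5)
Ваше рассуждение корректно и по существу совпадает с тем, как теорема получается в статье: там она также объявлена прямым следствием леммы~\ref{lem} при погружении $\vp \mapsto A \imp \sigma(\vp)$, причём экспоненциальная оценка обусловлена именно разворачиванием модальностей $\Box^{\leqslant\mathop{\mathsf{md}}\vp}$ и $\Box^{\leqslant\mathop{\mathsf{md}}\vp}_{-1}$ в формуле~$A$, как отмечено сразу после её определения. Ваша проверка однопеременности $\sigma(\vp)$ и полиномиальности вклада $\beta_k$ и $B$ лишь делает явным то, что в статье оставлено читателю.
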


\section{Замечания}

Теорема~\ref{th:T} сразу даёт следствия о неразрешимости фрагментов от одной переменной для всех неразрешимых произведений логик, в которых имеется сомножитель~$\mathbf{T}$. Заметим, что в рассуждениях выше логику $\mathbf{T}$ можно заменить на $\mathbf{K}$: для возникающих шкал уже не нужно будет проверять условие рефлексивности и брать рефлексивное замыкание возникающих в построениях отношений достижимости, а свойство сабфреймовости для $\mathbf{K}$ тоже выполняется. В итоге получаем, что аналог теоремы~\ref{th:T} справедлив и для~$\mathbf{K}$. Конечно, такой результат для произведений с~$\mathbf{K}$ является более слабым, чем представленный в~\cite{RShJLC21a}, но он, тем не менее, имеет, как минимум, одно преимущество. В~\cite{HHK02} доказано, что любая логика между $\mathbf{K}\times\mathbf{K}\times\mathbf{K}$ и $\mathbf{S5}\times\mathbf{S5}\times\mathbf{S5}$ неразрешима, при этом для доказательства используется одно и то же сведение некоторой неразрешимой проблемы к каждой такой логике. Применяя к этому сведению описанную выше технику, получаем следующий результат.

    \begin{corollary}
    \label{cor:th:T}
        Фрагмент от одной переменной любой логики, заключённой между\/ $\mathbf{K}\times\mathbf{K}\times\mathbf{K}$ и\/ $\mathbf{T}\times\mathbf{S5}\times\mathbf{S5}$, неразрешим.
    \end{corollary}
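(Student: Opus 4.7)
Замысел такой: применить лемму~\ref{lem}, а также её усиление для~$\mathbf{K}$ в качестве первого сомножителя (упомянутое в первом абзаце настоящего раздела), к единому сведению из~\cite{HHK02}, которое одновременно обосновывает неразрешимость любой логики из интервала $[\mathbf{K}\times\mathbf{K}\times\mathbf{K},\, \mathbf{S5}\times\mathbf{S5}\times\mathbf{S5}]$.

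Напомним, что это сведение эффективно сопоставляет входу~$P$ некоторой неразрешимой проблемы $3$\nobreakdash-модальную формулу~$\varphi_P$ так, что: (i) при положительном~$P$ формула~$\varphi_P$ лежит уже в~$\mathbf{K}\times\mathbf{K}\times\mathbf{K}$; (ii) при отрицательном~$P$ формула~$\varphi_P$ не лежит даже в~$\mathbf{S5}\times\mathbf{S5}\times\mathbf{S5}$. Для произвольной логики~$L$ с $\mathbf{K}\times\mathbf{K}\times\mathbf{K} \subseteq L \subseteq \mathbf{T}\times\mathbf{S5}\times\mathbf{S5}$ мы покажем, что $P$ положителен тогда и только тогда, когда $A \imp \sigma(\varphi_P) \in L$. В одну сторону: при положительном~$P$ из~(i) имеем $\varphi_P \in \mathbf{K}\times\mathbf{K}\times\mathbf{K}$; применяя лемму~\ref{lem} в варианте для~$\mathbf{K}$ к логике~$\mathbf{K}\times\mathbf{K}\times\mathbf{K}$, получаем $A \imp \sigma(\varphi_P) \in \mathbf{K}\times\mathbf{K}\times\mathbf{K} \subseteq L$. В другую сторону (по контрапозиции): при отрицательном~$P$, ввиду~(ii) и включения $\mathbf{T}\times\mathbf{S5}\times\mathbf{S5} \subseteq \mathbf{S5}\times\mathbf{S5}\times\mathbf{S5}$, выводим $\varphi_P \notin \mathbf{T}\times\mathbf{S5}\times\mathbf{S5}$; применяя лемму~\ref{lem} к логике~$\mathbf{T}\times\mathbf{S5}\times\mathbf{S5}$, заключаем, что $A \imp \sigma(\varphi_P) \notin \mathbf{T}\times\mathbf{S5}\times\mathbf{S5}$, и, так как $L \subseteq \mathbf{T}\times\mathbf{S5}\times\mathbf{S5}$, получаем $A \imp \sigma(\varphi_P) \notin L$.

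Получившееся сведение $P \mapsto A \imp \sigma(\varphi_P)$ вычислимо (с учётом экспоненциальной оценки из замечания перед леммой~\ref{lem}), и формула $A \imp \sigma(\varphi_P)$ содержит всего одну пропозициональную переменную; отсюда и следует неразрешимость фрагмента от одной переменной логики~$L$. Основное, за чем нужно проследить,~--- это что лемма~\ref{lem} применяется в разных направлениях к двум разным логикам (<<нижней>> и <<верхней>> границам интервала), и именно поэтому в формулировке следствия верхняя граница сужена до $\mathbf{T}\times\mathbf{S5}\times\mathbf{S5}$: сама верхняя граница должна удовлетворять условиям леммы~\ref{lem}, то есть содержать $\mathbf{T}$ (или хотя бы $\mathbf{K}$) в качестве первого сомножителя.
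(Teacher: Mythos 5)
Ваше рассуждение корректно и по существу совпадает с доказательством в статье: там следствие тоже получается применением погружения $\varphi \mapsto A \imp \sigma(\varphi)$ (в варианте с~$\mathbf{K}$ для нижней границы и с~$\mathbf{T}$ для верхней) к единому сведению из~\cite{HHK02}, работающему сразу для всех логик интервала. Вы лишь подробнее расписали, к какой из граничных логик и в каком направлении применяется лемма~\ref{lem}, что соответствует замыслу авторов.
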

    
Теперь обратимся к вопросу о том, можно ли как-то изменить $\sigma$, чтобы получить полиномиальное погружение логик, содержащих сомножитель $\mathbf{T}$, в свой фрагмент от одной переменной. И ответ здесь такой: да, можно. Суть изменения в том, что вместо модальностей $\Box^{\leqslant\mathop{\mathsf{md}}\vp}_{\phantom{i}}$ и $\Box^{\leqslant\mathop{\mathsf{md}}\vp}_{-1}$, дающих экспоненциальную зависимость длины формулы $A$ от длины формулы~$\varphi$, нужно использовать другие модальности, сходные с указанными, но учитывающие то, какие именно последовательности вложенных модальностей имеются в формуле~$\vp$. Это приводит к более сложным построениям, увеличивая объём доказательства (при сходной детализации) в несколько раз. Именно такой подход и изложен в~\cite{KTB}; мы же хотели показать здесь более простой вариант построений, убрав технические детали, работающие на полиномиальность возникающего погружения, и оставив лишь ключевую идею.



\begin{thebibliography}{99}

\bibitem{BCWZ02}
Brandon Bennett, Anthony~G. Cohn, Frank Wolter, and Michael Zakharyaschev.
\newblock Multi-dimensional modal logic as a framework for spatio-temporal
  reasoning.
\newblock {\em Applied Intelligence}, 17:239–251, 2002.

\bibitem{ChRyb03}
Alexander Chagrov and Mikhail Rybakov.
\newblock How many variables does one need to prove {P}{S}{P}{A}{C}{E}-hardness
  of modal logics?
\newblock In Philippe Balbiani, Nobu-Yuki Suzuki, Frank Wolter, and Michael
  Zakharyaschev, editors, {\em Advances in Modal Logic 4}, pages 71--82. King's
  College Publications, 2003.

\bibitem{ChZ}
\newblock Alexander~Chagrov, Michael~Zakharyaschev.
\newblock Modal Logic.
\newblock Oxford University Press, 1997.

\bibitem{FHMV95}
Ronald Fagin, Joseph~Y. Halpern, Yoram Moses, and Moshe~Y. Vardi.
\newblock {\em Reasoning About Knowledge}.
\newblock MIT Press, 1995.

\bibitem{GKWZ}
Dov Gabbay, Agi Kurucz, Frank Wolter, and Michael Zakharyaschev.
\newblock {\em Many-{D}imensional Modal Logics: Theory and Applications},
  volume 148 of {\em Studies in Logic and the Foundations of Mathematics}.
\newblock Elsevier, 2003.

\bibitem{GSH98}
Dov Gabbay and Valentin Shehtman.
\newblock Products of modal logics, {P}art 1.
\newblock {\em Logic Journal of the {I}{G}{P}{L}}, 6(1):73--146, 1998.

\bibitem{GKWZ05}
David Gabelaia, Agi Kurucz, Frank Wolter, and Michael Zakharyaschev.
\newblock Products of 'transitive' modal logics.
\newblock {\em The Journal of Symbolic Logic}, 70(3):993--1021, 2005.

\bibitem{GJL10}
Stefan G\"oller, Jean-Christoph Jung, and Markus Lohrey.
\newblock The complexity of decomposing modal and first-order theories.
\newblock {\em ACM Transactions on Computational Logic}, 9(4):Article 39, 2010.

\bibitem{GorSh09b}
Valentin Goranko and Dmitry Shkatov.
\newblock Tableau-based decision procedure for full coalitional multiagent
  temporal-epistemic logic of linear time.
\newblock In C.~Sierra, C.~Castelfranchi, K.~S. Decker, and J.~Sichman,
  editors, {\em Proceedings of 8th International Joint Conference on Autonomous
  Agents and Multiagent Systems ({A}{A}{M}{A}{S} 09)}, pages 969--976, 2009.

\bibitem{Halpern95}
Joseph~Y. Halpern.
\newblock The effect of bounding the number of primitive propositions and the
  depth of nesting on the complexity of modal logic.
\newblock {\em Artificial Intelligence}, 75(2):361--372, 1995.

\bibitem{HV89}
Joseph~Y. Halpern and Moshe~Y. Vardi.
\newblock The complexity of reasoning about knowledge and time {I}: {L}ower
  bounds.
\newblock {\em Journal of Computer and System Sciences}, 38(1):195--237, 1989.

\bibitem{Hem01}
Edith Hemaspaandra.
\newblock The complexity of poor man's logic.
\newblock {\em Journal of Logic and Computation}, 11(4):609--622, 2001.

\bibitem{HHK02}
Robin Hirsch, Ian Hodkinson, and Agi Kurucz.
\newblock On modal logics between ${K} \times {K} \times {K}$ and ${S5} \times
  {S5} \times {S5}$.
\newblock {\em The Journal of Symbolic Logic}, 67(1):221--234, 2002.

\bibitem{KKZW07}
Roman Kontchakov, Agi Kurucz, Frank Wolter, and Michael Zakharyaschev.
\newblock Spatial logic + temporal logic = ?
\newblock In M.~Aiello, I.~Pratt-Hartmann, and J.~Van Benthem, editors, {\em
  Handbook of Spatial Logics}, pages 497--564. Springer, 2007.

\bibitem{Kurucz08}
Agi Kurucz.
\newblock Combining modal logics.
\newblock In P.~Blackburn, J.~Van Benthem, and F.~Wolter, editors, {\em
  Handbook of Modal Logic}, volume~3 of {\em Studies in Logic and Practical
  Reasoning}, pages 869--924. Elsevier, 2008.

\bibitem{Ladner77}
Richard~E. Ladner.
\newblock The computational complexity of provability in systems of modal
  propositional logic.
\newblock {\em {S}{I}{A}{M} Journal on Computing}, 6(3):467--480, 1977.

\bibitem{LWZ08}
Carsten Lutz, Frank Wolter, and Michael Zakharyaschev.
\newblock Temporal description logics: A survey.
\newblock In {\em 15th International Symposium on Temporal Representation and
  Reasoning}, pages 3--14, 2008.

\bibitem{Marx99}
Maarten Marx.
\newblock Complexity of products of modal logics.
\newblock {\em Journal of Logic and Computation}, 9(2):197--214, 1999.

\bibitem{MM01}
Maarten Marx and Szabolcs Mikul\'as.
\newblock Products, or how to create modal logics of high complexity.
\newblock {\em Logic Journal of the {I}{G}{P}{L}}, 9(1):71--82, 2001.

\bibitem{RZ01}
Mark Reynolds and Michael Zakharyaschev.
\newblock On the products of linear modal logics.
\newblock {\em Journal of Logic and Computation}, 11(6):909--931, 2001.

\bibitem{Rybakov06}
Mikhail Rybakov.
\newblock Complexity of intuitionistic and Visser’s basic and formal logics in finitely many variables.
\newblock Advances in Modal Logic, vol.\,6, College Publications, London, 2006, pages 393--411.

\bibitem{Rybakov07}
Mikhail Rybakov.
\newblock Complexity of finite-variable fragments of
  {E}{X}{P}{T}{I}{M}{E}-complete logics.
\newblock {\em Journal of Applied Non-classical logics}, 17(3):359--382, 2007.

\bibitem{Rybakov07a}
Mikhail Rybakov.
\newblock Complexity of intuitionistic propositional logic and its fragments.
\newblock Journal of Applied Non-Classical Logics, 18:2--3 (2008), pages 267--292.

\bibitem{Rybakov08}
Михаил Николаевич Рыбаков.
\newblock Сложность константного фрагмента пропозициональной динамической логики.
\newblock Вестник ТвГУ. Серия: Прикладная математика, 2007, \textnumero\,5, с.\,5--17.

\bibitem{RShIGPL18}
Mikhail Rybakov and Dmitry Shkatov.
\newblock Complexity and expressivity of propositional dynamic logics with
  finitely many variables.
\newblock {\em Logic Journal of the IGPL}, 26(5):539--547, 2018.

\bibitem{RShICTAC18}
Mikhail Rybakov and Dmitry Shkatov.
\newblock Complexity and expressivity of branching- and alternating-time
  temporal logics with finitely many variables.
\newblock In B.~Fischer B. and T.~Uustalu, editors, {\em Theoretical Aspects of
  Computing--ICTAC 2018}, volume 11187 of {\em Lecture Notes in Computer
  Science}, pages 396--414, 2018.

\bibitem{RShIGPL19}
Mikhail Rybakov and Dmitry Shkatov.
\newblock Complexity of finite-variable fragments of propositional modal logics
  of symmetric frames.
\newblock {\em Logic Journal of the IGPL}, 27(1):60--68, 2019.

\bibitem{RShJLC21a}
Mikhail Rybakov and Dmitry Shkatov.
\newblock Complexity of finite-variable fragments of products with {K}.
\newblock {\em Journal of Logic and Computation}, 31(2):426--443, 2021.

\bibitem{KTB}
Mikhail Rybakov, Dmitry Shkatov.
\newblock Complexity of finite-variable fragments of products with non-transitive modal logics.
\newblock {\em Journal of Logic and Computation}, doi:~10.1093/logcom/exab080.

\bibitem{Seg73}
Krister Segerberg.
\newblock Two-dimensional modal logic.
\newblock {\em Journal of Philosophical Logic}, 2(1):77--96, 1973.

\bibitem{Shehtman78}
Валентин Борисович Шехтман.
\newblock Двухмерные модальные логики.
\newblock Математические заметки, 23(5):759--772, 1978.

\bibitem{Spaan-1993-1}
Edith Spaan.
\newblock  Complexity of Modal Logics.
\newblock PhD thesis. Department of Mathematics and Computer Science, University of Amsterdam, 1993.

\bibitem{Statman79}
Richard Statman.
\newblock Intuitionistic propositional logic is polynomial-space complete.
\newblock Theoretical Computer Science, 9:67--72, 1979.

\bibitem{WvdH03}
Wiebe van~der Hoek and Michael Wooldridge.
\newblock Cooperation, knowledge, and time: Alternating-time temporal epistemic
  logic and its applications.
\newblock {\em Studia Logica}, 75(1):125--157, 2003.



\bibitem{WZ99}
Frank Wolter and Michael Zakharyaschev.
\newblock Modal description logics: modalizing roles.
\newblock {\em Fundamenta Informaticae}, 39(4):411--438, 1999.

\bibitem{WZ2000}
Frank Wolter and Michael Zakharyaschev.
\newblock Temporalizing description logics.
\newblock In D.~Gabbay and M.~de~Rijke, editors, {\em Frontiers of Combining
  Systems {I}{I}}, pages 379--401. Studies Press/Wiley, 2000.


\end{thebibliography}
\end{document}